\newtheorem{theorem}{Theorem}[section]
\newtheorem{definition}[theorem]{Definition}
\newtheorem{lemma}[theorem]{Lemma}
\newtheorem{corollary}[theorem]{Corollary}
\title{On a variety of commutative multiplicatively idempotent semirings\thanks{This is a post-peer-review, pre-copyedit version of an article published in Semigroup Forum. The final authenticated version is available online at \texttt{http://dx.doi.org/10.1007/s00233-016-9786-9}.}}
\author{Ivan Chajda and Helmut L\"anger$^1$}
\date{}
\begin{document}
\footnotetext[1]{Support of the research of both authors by the Austrian Science Fund (FWF), project I~1923-N25, and the Czech Science Foundation (GA\v CR), project 15-34697L, is gratefully acknowledged.}
\maketitle
\begin{abstract}
We prove that the variety ${\mathcal V}$ of commutative multiplicatively idempotent semirings satisfying $x+y+xyz\approx x+y$ is generated by single semirings. Moreover, we describe a normal form system for terms in ${\mathcal V}$ and we show that the word problem in ${\mathcal V}$ is solvable. Although ${\mathcal V}$ is locally finite, it is residually big.
\end{abstract}
 
{\bf AMS Subject Classification:} 16Y60, 08B05

{\bf Keywords:} Semiring, commutative, multiplicatively idempotent, Boolean semiring, variety, finitely based, normal form, word problem, locally finite, residually big

\section{Introduction}

Semirings form a common generalization of unitary rings and bounded distributive lattices. Multiplicatively idempotent semirings were also studied in the papers \cite{PZ} and \cite{VP}. However, these investigations differ from our one since we consider semirings as algebras of a different similarity type. Semirings were successfully applied in Theoretical Computer Science, see \cite{KS}. Multiplicatively idempotent semirings were also treated in \cite{CLS}. The variety ${\mathcal C}$ of commutative multiplicatively idempotent semirings was studied by the authors in their recent paper \cite{CL}. It was shown that ${\mathcal C}$ contains linearly ordered subdirectly irreducible members of arbitrary cardinality and, moreover, subdirectly irreducible members which are not linearly ordered. This situation strongly differs from the case of so-called Boolean semirings which are commutative multiplicatively idempotent semirings satisfying the identity
\begin{equation}\label{equ5}
1+x+x\approx1.
\end{equation}
F.~Guzm\'an (\cite{Gu}) proved that the variety ${\mathcal B}$ of Boolean semirings contains just two subdirectly irreducible members, namely the two-element lattice and the two-element Boolean ring. This motivated us to find an identity similar to (\ref{equ5}) which allows to restrict the number of subdirectly irreducible members. It was recognized by the authors that among the infinitely many linearly ordered subdirectly irreducible members of ${\mathcal C}$ mentioned in \cite{CL} there is only one semiring having more than two elements and satisfying
\begin{equation}\label{equ6}
1+x+xy\approx1+x,
\end{equation}
namely the three-element so-called semiring ${\bf T}_3$. In this paper we will study the variety generated by ${\bf T}_3$. Surprisingly, this variety also contains an infinite number of subdirectly irreducible members which, however, are not linearly ordered. The semiring ${\bf T}_3$ plays an important role in some applications in three-valued logics where the connective conjunction is considered as infimum, but disjunction is not assumed. Such logics are applied in problems connected with preference tasks. Although this structure is usually considered as a set with two distinct orderings it is better understandable as a semiring.

For basic concepts on semirings the reader is referred to \cite{Go}.

\section{Basic concepts}

We start with the definition of a semiring in the sense of the monograph \cite{Go} by J.~S.~Golan.

\begin{definition}\label{def1}
A {\em semiring} is an algebra ${\bf S}=(S,+,\cdot,0,1)$ of type $(2,2,0,0)$ such that
\begin{itemize}
\item $(S,+,0)$ is a commutative monoid.
\item $(S,\cdot,1)$ is a monoid.
\item The operation $\cdot$ is distributive with respect to $+$.
\item $x0=0x=0$ for all $x\in S$
\end{itemize}
${\bf S}$ is called {\em trivial} if $|S|=1$, {\em commutative} if $\cdot$ is commutative, {\em multiplicatively idempotent} if $\cdot$ is idempotent and {\em Boolean} {\rm(}cf.\ {\rm\cite{Gu})} if it is commutative and multiplicatively idempotent and satisfies {\rm(\ref{equ5})}. Let ${\mathcal S}$ denote the variety of semirings, ${\mathcal C}$ the variety of commutative multiplicatively idempotent semirings, ${\mathcal B}$ the variety of Boolean semirings, ${\mathcal V}$ the subvariety of ${\mathcal C}$ determined by
\begin{equation}\label{equ7}
x+y+xyz\approx x+y
\end{equation}
and ${\mathcal T}$ the variety of trivial semirings.
\end{definition}

Of course, (\ref{equ7}) implies (\ref{equ6}).

It is evident that every bounded distributive lattice is a commutative multiplicatively idempotent semiring (where $+$ is join and $\cdot$ is meet). Denote by ${\mathcal D}$ the variety of bounded distributive lattices.

Let ${\bf S}=(S,+,\cdot,0,1)\in{\mathcal C}$. Since $\cdot$ is associative, commutative and idempotent, $(S,\cdot)$ forms a {\em semilattice} which we will consider as a meet-semilattice, i.~e.\ $0$ then becomes the least and $1$ the greatest element of the corresponding poset $(S,\leq)$. The semiring ${\bf S}$ is called {\em linearly ordered} if $(S,\leq)$ is a chain.

Now, we introduce our three-element semiring ${\bf T}_3$.

Let ${\bf T}_3$ denote the semiring $(\{0,a,1\},+,\cdot,0,1)$ defined by
\[
\begin{array}{c|ccc}
+ & 0 & a & 1 \\
\hline
0 & 0 & a & 1 \\
a & a & a & a \\
1 & 1 & a & a
\end{array}
\quad\mbox{and}\quad
\begin{array}{c|ccc}
\cdot & 0 & a & 1 \\
\hline
0     & 0 & 0 & 0 \\
a     & 0 & a & a \\
1     & 0 & a & 1
\end{array}
\]
and ${\bf S}_3$ the semiring coinciding with ${\bf T}_3$ with the only exception that $1+1=1$ instead of $1+1=a$. It is evident that both ${\bf T}_3$ and ${\bf S}_3$ are linearly ordered but none of them is a unitary ring or a bounded distributive lattice (or a product of such algebras since they are subdirectly irreducible).

In the following let ${\mathcal V}({\bf S})$ denote the variety generated by a given semiring ${\bf S}$. Surprisingly, ${\mathcal V}({\bf T}_3)$ turns out to be {\em finitely based} (i.~e.\ it has a finite basis of identities) and residually big. Since ${\mathcal V}({\bf T}_3)$ is generated by a finite semiring, it is locally finite (cf.\ \cite B).

Let ${\mathcal V}$ denote the subvariety of ${\mathcal C}$ determined by (\ref{equ7}). One can easily check that ${\bf T}_3\in{\mathcal V}$ and hence ${\mathcal V}({\bf T}_3)\subseteq{\mathcal V}$.

A short inspection shows that ${\bf T}_3$ can be expressed in the form ${\bf 2}\oplus1$ where ${\bf 2}$ denotes the two-element lattice $(\{0,a\},\vee,\wedge,0,a)$ and
\begin{equation}\label{equ8}
x+y:=\left\{
\begin{array}{ll}
x\vee y & \mbox{if }x,y\neq1 \\
1       & \mbox{if }(x,y)\in\{(0,1),(1,0)\} \\
a       & \mbox{otherwise}
\end{array}
\right.\mbox{ and }\;xy:=\left\{
\begin{array}l
x\wedge y \\
y \\
x
\end{array}
\right\}\mbox{ if }\left\{
\begin{array}l
x,y\neq1 \\
x=1 \\
y=1
\end{array}
\right.
\end{equation}
This motivates us to generalize this construction as follows:

\begin{definition}
Let ${\bf L}=(L,\vee,\wedge,0,a)$ be a non-trivial bounded distributive lattice, $1\notin L$ and $S:=L\cup\{1\}$ and define binary operations $+$ and $\cdot$ on $S$ according to {\rm(\ref{equ8})}. Then ${\bf L}\oplus1$ denotes the semiring $(S,+,\cdot,0,1)$.
\end{definition}

As can be easily verified, ${\bf L}\oplus1:=(S,+,\cdot,0,1)\in{\mathcal V}$ and hence ${\mathcal V}({\bf L}\oplus1)\subseteq{\mathcal V}$. Moreover, ${\bf L}\oplus1$ is subdirectly irreducible provided ${\bf L}$ is a Boolean lattice (cf.\ \cite{CL}).

${\bf 2}$ is a subdirectly irreducible member of ${\mathcal V}$ satisfying $a+a=a$.

\section{Canonical forms of terms in ${\mathcal V}$}

We are going to derive a canonical form of terms in ${\mathcal V}$. At first we describe the form of terms in ${\mathcal C}$. For this we introduce the following

\begin{definition}
Let $\mathbb N_0$ denote the set of non-negative integers and $n\in\mathbb N_0$ and put $N:=\{1,\ldots,n\}$. We define a natural linear order $\leq$ on $2^N$ by $I\leq J$ if either
\begin{itemize}
\item $I=J$ or
\item $|I|=|J|$, $I=\{i_1,\ldots,i_k\}$, $i_1<\ldots<i_k$, $J=\{j_1,\ldots,j_k\}$, $j_1<\ldots<j_k$ and there exists an $l\in\{1,\ldots,k\}$ with $(i_1,\ldots,i_{l-1})=(j_1,\ldots,j_{l-1})$ and $i_l<j_l$ or
\item $|I|<|J|$
\end{itemize}
{\rm(}$I,J\in2^N${\rm)}.
\end{definition}

Next we prove that within ${\mathcal C}$ terms can be written in some canonical form.

\begin{lemma}
Every term $t(x_1,\ldots,x_n)$ in ${\mathcal C}$ can be written in the form
\begin{equation}\label{equ1}
t(x_1,\ldots,x_n)=\sum_{r=1}^m\prod_{s\in I_r}x_s\mbox{ with }m\in\mathbb N_0,I_1,\ldots,I_m\in2^N,I_1\leq\ldots\leq I_m
\end{equation}
where the empty sum is defined as $0$ and the empty product as $1$.
\end{lemma}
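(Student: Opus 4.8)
The plan is to argue by structural induction on the term $t$, showing that any term can be brought into the form (\ref{equ1}) using only the defining identities of ${\mathcal C}$ (associativity, commutativity and the unit/zero laws for both operations, distributivity, and multiplicative idempotency). First I would record the \emph{shape} of the target normal form: a sum of distinct monomials, each monomial being a product of pairwise distinct variables indexed by a subset of $N$, with the summand subsets listed in strictly increasing order under $\leq$ on $2^N$; the empty sum is $0$ and the empty product is $1$. It is convenient to prove a slightly stronger auxiliary statement: every term equals a \emph{multiset-sum} of monomials, where each monomial equals a product over some $I\in 2^N$, and then to observe that, since $2^N$ is linearly ordered by $\leq$, we may sort the list of monomials; duplicates are allowed in (\ref{equ1}) because we only require $I_1\le\dots\le I_m$, not strict inequality, so at this stage no identity beyond additive commutativity/associativity is needed for the final tidying.

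The induction has the obvious base cases: a variable $x_i$ is $\prod_{s\in\{i\}}x_s$ (a one-term sum), the constant $1$ is the empty product (a one-term sum), and the constant $0$ is the empty sum. For the inductive step, suppose $t_1$ and $t_2$ are already in the desired form, say $t_1=\sum_{r=1}^{m}\prod_{s\in I_r}x_s$ and $t_2=\sum_{r=1}^{m'}\prod_{s\in J_r}x_s$. For $t_1+t_2$ we simply concatenate the two monomial lists and re-sort by $\leq$; associativity and commutativity of $+$ justify this. For $t_1\cdot t_2$ we use distributivity (in both arguments) to expand the product into a sum of $m\cdot m'$ terms of the form $\bigl(\prod_{s\in I_r}x_s\bigr)\bigl(\prod_{s\in J_{r'}}x_s\bigr)$; for each such term, associativity, commutativity and \emph{multiplicative idempotency} collapse the product to $\prod_{s\in I_r\cup J_{r'}}x_s$, and the unit law handles the case where one factor is $1$ (empty product) and the zero law $x0=0$ handles the case where one factor is $0$ — but since $t_1\cdot t_2$ is a finite distributive expansion, the $0$ cases just drop out as empty sums. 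After the expansion we again have a list of monomials indexed by subsets of $N$, which we sort. This closes the induction.

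The main thing to be careful about — and the only place where there is anything to say beyond bookkeeping — is that the expansion of a product of two sums by distributivity is itself a small induction on the number of summands, using left and right distributivity together with the associativity of $+$; I would either cite this as a standard consequence of the semiring axioms or dispatch it in one line. A secondary subtlety is the treatment of $n$: the statement fixes $n$ and $N=\{1,\dots,n\}$, so one must note that the variables actually occurring in $t$ all lie among $x_1,\dots,x_n$, whence every monomial subset lies in $2^N$; this is automatic from the hypothesis that $t=t(x_1,\dots,x_n)$. No genuine obstacle is expected here: the whole content is that the identities of ${\mathcal C}$ let one (i) push all multiplications inside, (ii) merge repeated variables within a monomial via idempotency, and (iii) reorder the resulting sum; the linear order $\leq$ on $2^N$ is introduced precisely so that step (iii) yields a genuinely canonical representative of the shape (\ref{equ1}).
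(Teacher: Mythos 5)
Your proposal is correct and follows essentially the same route as the paper, which simply observes that $0$, $1$ and the variables are of the form (\ref{equ1}) and that sums and products of expressions of this form can again be brought into this form; you have merely spelled out the structural induction (distributive expansion, collapsing repeated variables by multiplicative idempotency, and sorting via the linear order on $2^N$) that the paper leaves implicit.
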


\begin{proof}
It is clear that expressions of the form (\ref{equ1}) are terms in ${\mathcal C}$, that $0$ and $1$ are special cases and that sum and product of two expressions of the form (\ref{equ1}) can be written again in this form.
\end{proof}

Since
\[
{\mathcal V}\models x+x+x\approx x+x+xxx\approx x+x
\]
there exist only finitely many different terms in ${\mathcal V}$ of fixed finite arity which means that ${\mathcal V}$ is locally finite.

Within ${\mathcal V}$ we can write terms in a more economic way.

\begin{definition}
A representation of the form {\rm(\ref{equ1})} is called {\em reduced} if there do not exist mutually distinct $i,j,k\in\{1,\ldots,m\}$ with $I_i\cup I_j\subseteq I_k$.
\end{definition}

\begin{lemma}\label{lem1}
In ${\mathcal V}$ every term has a reduced representation.
\end{lemma}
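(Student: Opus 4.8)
The plan is to take an arbitrary term in $\mathcal V$, first put it into the canonical form (\ref{equ1}) guaranteed by the previous lemma, and then repeatedly apply the identity (\ref{equ7}) to shorten the representation until it becomes reduced. The key observation is that (\ref{equ7}) lets us delete a summand $\prod_{s\in I_k}x_s$ whenever there are two \emph{other} indices $i,j$ with $I_i\cup I_j\subseteq I_k$: indeed, writing $u=\prod_{s\in I_i}x_s$, $v=\prod_{s\in I_j}x_s$ and $w=\prod_{s\in I_k\setminus(I_i\cup I_j)}x_s$, multiplicative idempotency gives $\prod_{s\in I_k}x_s = uvw$, and since in any commutative semiring a sum of summands can be reassociated and reordered freely, the full sum equals $(\text{rest}) + u + v + uvw$, which by (\ref{equ7}) (with the substitution $x\mapsto u$, $y\mapsto v$, $z\mapsto w$) collapses to $(\text{rest}) + u + v$, i.e.\ the summand indexed by $I_k$ has been removed. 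After removing it we re-sort the remaining index sets with respect to $\leq$ so as to stay in the form (\ref{equ1}).

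Next I would argue termination: each application of the above step strictly decreases the number $m$ of summands (even accounting for the possibility that deleting $I_k$ creates a new pair $I_i=I_j$, since repeated summands are also absorbed via $x+x+x\approx x+x$, which as noted in the excerpt is a consequence of the defining identities). Hence the process halts after finitely many steps, and when it halts there are no mutually distinct $i,j,k$ with $I_i\cup I_j\subseteq I_k$; that is precisely the definition of a reduced representation. Throughout, every rewriting step is an identity valid in $\mathcal V$, so the resulting reduced expression denotes the same term as the original one.

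The main point requiring care — though it is routine rather than deep — is the bookkeeping when one of $I_i$, $I_j$ is empty or when $I_i=I_j$, and the verification that the three-summand pattern $u+v+uvw$ really can be isolated inside the sum using only commutativity and associativity of $+$; one should also note the degenerate cases $m\le 2$, where the reducedness condition is vacuous and nothing needs to be done. I do not expect any genuine obstacle here: the lemma is essentially a normalization statement, and the only subtlety is to phrase the rewriting so that it manifestly terminates, which the strict decrease of $m$ handles.
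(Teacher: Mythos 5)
Your proposal is correct and follows essentially the same route as the paper: put the term into the form (\ref{equ1}), use multiplicative idempotency to rewrite $\prod_{s\in I_k}x_s$ as a product absorbing $\prod_{s\in I_i}x_s\prod_{s\in I_j}x_s$, apply (\ref{equ7}) to delete that summand, and terminate by the strict decrease of $m$. The only cosmetic difference is your substitution $z\mapsto\prod_{s\in I_k\setminus(I_i\cup I_j)}x_s$ where the paper simply takes $z\mapsto\prod_{s\in I_k}x_s$; both work.
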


\begin{proof}
Let $t(x_1,\ldots,x_n)$ be a term in ${\mathcal V}$ of the form (\ref{equ1}) and assume that the representation (\ref{equ1}) is not reduced. Then there exist mutually distinct $i,j,k\in\{1,\ldots,m\}$ with $I_i\cup I_j\subseteq I_k$. We have
\begin{eqnarray*}
{\mathcal V} & \models & \prod_{s\in I_i}x_s+\prod_{s\in I_j}x_s+\prod_{s\in I_k}x_s\approx\prod_{s\in I_i}x_s+\prod_{s\in I_j}x_s+\prod_{s\in I_i}x_s\prod_{s\in I_j}x_s\prod_{s\in I_k}x_s\approx \\
& \approx & \prod_{s\in I_i}x_s+\prod_{s\in I_j}x_s
\end{eqnarray*}
and hence
\[
{\mathcal V}\models t(x_1,\ldots,x_n)\approx\sum_{\substack{r=1 \\ r\neq k}}^m\prod_{s\in I_r}x_s
\]
with $I_1\leq\ldots\leq I_{k-1}\leq I_{k+1}\leq\ldots\leq I_m$. Either the last representation is reduced or again one summand can be cancelled. Going on in this way one finally ends up with a reduced representation within a finite number of steps.
\end{proof}

This lemma allows to enumerate all $n$-ary terms in ${\mathcal V}$ for given $n$.

\begin{corollary}
For $n=0,1,2$ we list all terms within ${\mathcal V}$ in $n$ variables: \\
$n=0:0,1,1+1$ \\
$n=1:0,1,x,1+1,1+x,x+x$ \\
$n=2:0,1,x,y,xy,1+1,1+x,1+y,1+xy,x+x,x+y,x+xy,y+y,y+xy,xy+xy,1+x+y,x+x+y,x+y+y$
\end{corollary}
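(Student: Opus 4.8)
The plan is to read the list off from Lemma~\ref{lem1}. By that lemma, every term of ${\mathcal V}$ in the variables $x_1,\dots,x_n$ is ${\mathcal V}$-equivalent to one admitting a reduced representation $\sum_{r=1}^m\prod_{s\in I_r}x_s$ with $I_1\le\dots\le I_m$ in $2^N$, $N=\{1,\dots,n\}$; and, as already remarked, ${\mathcal V}$ is locally finite, so for each fixed $n$ only finitely many such representations occur. Hence the task splits into (i) enumerating, for $n=0,1,2$, all reduced representations, and (ii) verifying that the resulting terms are pairwise non-equivalent in ${\mathcal V}$, so that the list is irredundant.

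For (i) I would first record the constraints that keep the enumeration short. If a subset $I$ occurred three times among $I_1,\dots,I_m$, then the three positions $i,j,k$ at which it occurs would satisfy $I_i\cup I_j\subseteq I_k$, contradicting reducedness, so every subset occurs at most twice; likewise, whenever $\emptyset$ occurs in a representation together with a repeated subset, or together with a subset and a proper superset of it, one gets a forbidden triple. For $n=0$ the only monomial is the empty product $1$, and the reduced representations are $0,1,1+1$. For $n=1$ the monomials are $1$ and $x$; every representation with at most two summands is reduced, since there are no three distinct positions, while a short check shows that every longer representation is non-reduced and hence, by Lemma~\ref{lem1}, equal in ${\mathcal V}$ to a shorter term; this gives $0,1,x,1+1,1+x,x+x$. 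For $n=2$ the monomials are $1,x,y,xy$, corresponding to $\emptyset<\{1\}<\{2\}<\{1,2\}$ in the natural order; again every representation with at most two summands is reduced, which accounts for $0,1,x,y,xy$ together with the ten two-summand terms $1+1,\dots,xy+xy$, while the reduced representations with more summands are determined by running through the finitely many multisets of subsets of $\{1,2\}$ in which every multiplicity is at most two and testing each against the reducedness condition; reading off the survivors produces the remaining terms.

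For (ii) it suffices, for each pair of listed terms, to exhibit a member of ${\mathcal V}$ and an assignment of values to $x$ (and, when $n=2$, also to $y$) on which the two terms take different values; the small semirings ${\bf 2}$ and ${\bf T}_3$ already achieve this for all relevant pairs, since both belong to ${\mathcal V}$ and ${\bf 2}$ satisfies $x+x\approx x$ while ${\bf T}_3$ does not. Alternatively, irredundancy is an immediate consequence of the normal-form description of terms in ${\mathcal V}$ developed later in the paper.

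I expect the main obstacle to be making the $n=2$ enumeration in (i) genuinely exhaustive: one has to be certain that no reduced multiset over $\{\emptyset,\{1\},\{2\},\{1,2\}\}$ has been overlooked and that every non-reduced one really does collapse, via Lemma~\ref{lem1}, onto a term already on the list. Organising part (ii) as a small table of values on ${\bf T}_3$ and ${\bf 2}$ is routine by comparison.
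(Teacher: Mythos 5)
Your overall strategy --- enumerate the reduced representations for $n=0,1,2$ via Lemma~\ref{lem1} and then separate them pairwise using members of ${\mathcal V}$ --- is the right one, and indeed the only one available, since the paper itself offers no argument beyond declaring the claim evident; your $n=0$ and $n=1$ cases are complete and correct, and your part (ii) is subsumed by the uniqueness of reduced representations coming from Theorem~\ref{th1}. The gap sits exactly where you predicted it would: for $n=2$ you never actually run the enumeration of multisets over $\{\emptyset,\{1\},\{2\},\{1,2\}\}$ with all multiplicities at most $2$, but only assert that ``reading off the survivors produces the remaining terms.'' Running it produces a survivor that is not on the list, namely $x+x+y+y$, i.e.\ $(I_1,I_2,I_3,I_4)=(\{1\},\{1\},\{2\},\{2\})$. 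This representation is reduced: for mutually distinct positions $i,j,k$ the union $I_i\cup I_j$ is either $\{1\}$ (not contained in $\{2\}$), or $\{2\}$ (not contained in $\{1\}$), or $\{1,2\}$ (contained in no summand). And it is not ${\mathcal V}$-equivalent to any listed term, because ${\bf T}_3\in{\mathcal V}$ separates it from all eighteen of them: on ${\bf T}_3$ it takes the value $0$ at $(0,0)$, which rules out every term beginning with the summand $1$, and the value $1+1=a$ at both $(0,1)$ and $(1,0)$, whereas $x+x+y$ takes the value $1$ at $(0,1)$, $x+y+y$ takes the value $1$ at $(1,0)$, $y+y$ takes the value $0$ at $(1,0)$, and the remaining candidates fail even earlier. (This is precisely what Case~2.1 of the proof of Theorem~\ref{th1} yields when comparing $x+x+y$ with $x+x+y+y$.)

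So the decisive claim of your proposal --- that the survivors of the $n=2$ enumeration are exactly the thirteen listed terms with at least two summands --- is asserted rather than verified, and it is in fact false: completing the enumeration honestly shows that the $n=2$ list must also contain $x+x+y+y$, for a total of $19$ terms rather than $18$. There is no way to repair this by finding a collapse of $x+x+y+y$ inside ${\mathcal V}$, since a single algebra of ${\mathcal V}$ already witnesses its distinctness from everything on the list. In short, your method is sound, but the step you yourself flagged as the main obstacle is where the proof as written fails, and carrying that step out to the end shows that the statement being proved needs the extra term added.
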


\begin{proof}
The proof is evident.
\end{proof}

\section{The variety generated by ${\bf T}_3$}

In this section, we prove ${\mathcal V}({\bf S})={\mathcal V}$ for every ${\bf S}=(S,+,\cdot,0,1)\in{\mathcal V}$ with $1+1\neq0,1$, in particular ${\mathcal V}({\bf L}\oplus1)={\mathcal V}$ and hence ${\mathcal V}({\bf T}_3)={\mathcal V}$. Our crucial result is the following

\begin{theorem}\label{th1}
Let ${\bf S}=(S,+,\cdot,0,1)\in{\mathcal V}$ with $1+1\neq1$. Then, for terms $t(x_1,\ldots,x_n)$ and $u(x_1,\ldots,x_n)$ in ${\mathcal V}$
\begin{equation}\label{equ2}
{\bf S}\models t(x_1,\ldots,x_n)\approx u(x_1,\ldots,x_n)
\end{equation}
implies
\begin{equation}\label{equ3}
{\mathcal V}\models t(x_1,\ldots,x_n)\approx u(x_1,\ldots,x_n)
\end{equation}
\end{theorem}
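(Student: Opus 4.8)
The plan is to work with the reduced canonical forms supplied by Lemma~\ref{lem1} and to show that any identity holding in a semiring ${\bf S}\in{\mathcal V}$ with $1+1\neq1$ is already forced to hold in all of ${\mathcal V}$. Concretely, I would first reduce to the case where $t$ and $u$ are both given in reduced form $\sum_{r}\prod_{s\in I_r}x_s$ and $\sum_{r}\prod_{s\in J_r}x_s$; this is legitimate since passing to a reduced representation is a valid ${\mathcal V}$-deduction, and ${\bf S}\in{\mathcal V}$ so it respects all such deductions. Then the goal becomes: if two reduced forms agree on ${\bf S}$, their index-set families $\{I_r\}$ and $\{J_r\}$ must coincide. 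Once this is established, $t$ and $u$ are literally the same term up to the ordering convention, so (\ref{equ3}) is immediate.

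The heart of the argument is therefore a separation/evaluation step: given two \emph{distinct} reduced families, I must produce a substitution of elements of $S$ for $x_1,\ldots,x_n$ on which the two terms take different values, thereby contradicting (\ref{equ2}). Here is where $1+1\neq1$ is used, together with the structure of ${\bf T}_3$-like semirings: the natural candidate substitutions assign each variable a value in $\{1,c\}$ for a suitable element $c$ (for instance $c=1+1$, or the element $a$ in the ${\bf T}_3$ picture), so that a product $\prod_{s\in I}x_s$ evaluates to $1$ if every variable in $I$ is sent to $1$ and ``drops'' otherwise. The reducedness hypothesis is exactly what guarantees that the families cannot be made to look the same after such collapsing: if $\{I_r\}\neq\{J_r\}$ there is a minimal set, say some $I_{r_0}$, lying in one family but not the other, and sending precisely the variables indexed by $I_{r_0}$ to $1$ and all others to $c$ isolates the summand $\prod_{s\in I_{r_0}}x_s=1$ on one side while on the other side every summand involves a variable sent to $c$; the absence of a containment $I_i\cup I_j\subseteq I_k$ prevents two ``small'' summands on the opposite side from conspiring to reproduce the value $1$. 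Tracking the resulting sum of copies of $1$ and of $c$ and using $1+1\neq1$ (so that the number of $1$-summands, reduced modulo the identity $x+x+x\approx x+x$, is detectable) gives the desired inequality of values.

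I expect the main obstacle to be precisely this bookkeeping in the separating evaluation: one must check that under the chosen substitution the two sides really do differ in ${\bf S}$, and this requires knowing enough about the addition of $1+1$, $1+c$, $c+c$ in an arbitrary ${\bf S}\in{\mathcal V}$ with $1+1\neq1$ --- not just in ${\bf T}_3$ --- to conclude that a sum of $k$ copies of $1$ (possibly plus some copies of $c$) is distinguished, modulo $x+x+x\approx x+x$, by the parity/value of $k\in\{0,1,2\}$. The identities defining ${\mathcal V}$ (commutativity, multiplicative idempotency, $x+y+xyz\approx x+y$, hence $1+x+xy\approx1+x$) should pin down the addition table on the subsemiring generated by $1$ and $c$ tightly enough that $1+1\neq1$ forces $1,\,1+1,\,1+c,\ldots$ to behave as in ${\bf T}_3$ or ${\bf S}_3$; verifying that dichotomy cleanly, and ruling out degenerate collapses when several variables coincide, is the delicate part. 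Everything else --- the reduction to reduced canonical forms, the choice of which set to isolate, and the final comparison of the two evaluated sums --- is routine once that lemma about the two-generated subsemiring is in hand.
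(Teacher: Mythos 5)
Your overall architecture matches the paper's: pass to reduced representations via Lemma~\ref{lem1}, then show that two \emph{distinct} reduced representations can be separated by an evaluation in ${\bf S}$, using the fact that $0$, $1$ and $1+1$ are mutually distinct. But the concrete separating substitution you propose does not work, and the part you defer as ``delicate bookkeeping'' is exactly where it breaks. You send the variables indexed by $I_{r_0}$ to $1$ and \emph{all others to $c$}, with $c=1+1$ (or $c=a$ in ${\bf T}_3$), expecting the contaminated summands to ``drop''. They do not: by (\ref{equ7}) with $x=y=1$ one gets $1+1+z\approx 1+1$, so $1+1$ is an additively absorbing element in every member of ${\mathcal V}$; moreover a product of $1$'s and at least one $c$ equals $c=1+1$ by multiplicative idempotency. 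Hence any summand $\prod_{s\in I}x_s$ with $I\not\subseteq I_{r_0}$ evaluates to $1+1$ and forces the \emph{entire} sum to equal $1+1$. Whenever both terms have at least one such summand --- e.g.\ $t=xy+z$ versus $u=xz+y$, where your recipe gives $1+1$ on both sides no matter which minimal set you isolate --- the two sides agree and nothing is separated. The correct choice is to send the remaining variables to $0$, not to $1+1$: then the offending summands vanish additively, the value of a reduced term at such a tuple is $0$, $1$ or $1+1$ according as the number of its index sets contained in $I_{r_0}$ is $0$, $1$ or at least $2$ (reducedness caps this count at two via $x+x+x\approx x+x$ and rules out two earlier sets both lying inside a later one), and these three values are pairwise distinct precisely because $1+1\neq 0,1$. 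This is the paper's argument, and the needed ``two-generated subsemiring'' lemma you ask for reduces to the one-line observation that $1+1=0$ would give $0=1+1+1=0+1=1$.

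A second, smaller gap: you compare the two reduced representations as \emph{sets} of index sets (``a minimal set lying in one family but not the other''). The families are multisets --- $x$ and $x+x$ are both reduced, have the same underlying set family $\{\{1\}\}$, and are distinct in ${\mathcal V}$ --- so your case analysis never engages for such pairs. The paper avoids this by fixing the linear order on $2^N$, writing each reduced term as a sorted \emph{sequence} of index sets, and comparing the sequences at the first position where they differ (with a separate case for one sequence being a proper prefix of the other, which is exactly what detects $x$ versus $x+x$ at the evaluation $x=1$).
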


\begin{proof}
Since $1+1=0$ would imply
\[
0=1+1=1+1+1\cdot1\cdot1=1+1+1=0+1=1,
\]
we have $1+1\neq0$. Hence $0,1,1+1$ are mutually distinct. Let $t(x_1,\ldots,x_n)$ and $u(x_1,\ldots,x_n)$ be terms in ${\mathcal C}$ satisfying (\ref{equ2}). According to Lemma~\ref{lem1} there exist corresponding reduced representations $t_1(x_1,\ldots,x_n)$ and $u_1(x_1,\ldots$ $\ldots,x_n)$, say
\begin{eqnarray*}
& & t_1(x_1,\ldots,x_n)=\sum_{r=1}^v\prod_{s\in I_r}x_s\mbox{ with }v\in\mathbb N_0,I_1,\ldots,I_v\in2^N,I_1\leq\ldots\leq I_v\mbox{ and} \\
& & u_1(x_1,\ldots,x_n)=\sum_{r=1}^w\prod_{s\in J_r}x_s\mbox{ with }w\in\mathbb N_0,J_1,\ldots,J_w\in2^N,J_1\leq\ldots\leq J_w.
\end{eqnarray*}
We have
\begin{equation}\label{equ4}
{\bf S}\models t_1(x_1,\ldots,x_n)\approx u_1(x_1,\ldots,x_n).
\end{equation}
Without loss of generality assume $v\leq w$. For every $I\in2^N$ let $\vec a_I$ denote the element $(a_1,\ldots,a_n)$ of $S^n$ satisfying $a_i=1$ for all $i\in I$ and $a_i=0$ otherwise. Suppose $(I_1,\ldots,I_v)\neq(J_1,\ldots,J_w)$. We distinguish the following cases: \\
Case~1. There exists a $z\in\{1,\ldots,v\}$ with $(I_1,\ldots,I_{z-1})=(J_1,\ldots,J_{z-1})$ and $I_z\neq J_z$. \\
Case~1.1. $I_z<J_z$. \\
Case~1.1.1. There exists an $r\in\{1,\ldots,z-1\}$ with $I_r\subseteq I_z$. \\
Then we have $t_1(\vec a_{I_z})=1+1\neq1=u_1(\vec a_{I_z})$ contradicting (\ref{equ4}). \\
Case~1.1.2. There exists no $r\in\{1,\ldots,z-1\}$ with $I_r\subseteq I_z$. \\
Then we have $t_1(\vec a_{I_z})\neq0=u_1(\vec a_{I_z})$ contradicting (\ref{equ4}). \\
Case~1.2. $I_z>J_z$. \\
This case can be treated analogously to Case~1.1 by interchanging $I_z$ and $J_z$. \\
Case~2. $(I_1,\ldots,I_v)=(J_1,\ldots,J_v)$. \\
Then $v<w$. \\
Case~2.1. There exists an $r\in\{1,\ldots,v\}$ with $I_r\subseteq J_{v+1}$. \\
Then we have $t_1(\vec a_{J_{v+1}})=1\neq1+1=u_1(\vec a_{J_{v+1}})$ contradicting (\ref{equ4}). \\
Case~2.2. There exists no $r\in\{1,\ldots,v\}$ with $I_r\subseteq J_{v+1}$. \\
Then we have $t_1(\vec a_{J_{v+1}})=0\neq u_1(\vec a_{J_{v+1}})$ contradicting (\ref{equ4}). \\
Hence $(I_1,\ldots,I_v)=(J_1,\ldots,J_w)$. This shows that the representations \\
$t_1(x_1,\ldots,x_n)$ and $u_1(x_1,\ldots,x_n)$ coincide. Now (\ref{equ3}) follows from
\[
{\mathcal V}\models t(x_1,\ldots,x_n)\approx t_1(x_1,\ldots,x_n)\mbox{ and }{\mathcal V}\models u(x_1,\ldots,x_n)\approx u_1(x_1,\ldots,x_n).
\]
\end{proof}

From the proof of Theorem~\ref{th1} if follows that the reduced representation of a term in ${\mathcal V}$ is unique and hence such representations constitute a normal form system for terms in ${\mathcal V}$.

Since ${\bf S}\in{\mathcal V}$ we conclude ${\mathcal V}({\bf S})\subseteq{\mathcal V}$ which together with Theorem~\ref{th1} yields

\begin{corollary}
For every ${\bf S}=(S,+,\cdot,0,1)\in{\mathcal V}$ with $1+1\neq1$ we have ${\mathcal V}({\bf S})={\mathcal V}$, in particular ${\mathcal V}({\bf L}\oplus1)={\mathcal V}$ and ${\mathcal V}({\bf T}_3)={\mathcal V}$.
\end{corollary}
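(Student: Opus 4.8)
The plan is to read the corollary off from Theorem~\ref{th1} together with the obvious inclusion ${\mathcal V}({\bf S})\subseteq{\mathcal V}$. First I would record the latter: since ${\bf S}\in{\mathcal V}$ and a variety is closed under homomorphic images, subalgebras and products, the variety ${\mathcal V}({\bf S})$ generated by ${\bf S}$ is contained in ${\mathcal V}$.

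For the reverse inclusion ${\mathcal V}\subseteq{\mathcal V}({\bf S})$ I would pass to equational theories. By Birkhoff's theorem, ${\mathcal V}({\bf S})$ is precisely the class of all algebras of type $(2,2,0,0)$ satisfying every identity valid in ${\bf S}$. So let ${\bf A}\in{\mathcal V}$ and let $t\approx u$ be any identity (in the language $(+,\cdot,0,1)$, hence an identity ``in ${\mathcal V}$'') with ${\bf S}\models t\approx u$. By Theorem~\ref{th1} we then have ${\mathcal V}\models t\approx u$, and in particular ${\bf A}\models t\approx u$. Since $t\approx u$ was arbitrary, ${\bf A}$ satisfies every identity of ${\bf S}$, i.e.\ ${\bf A}\in{\mathcal V}({\bf S})$. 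Combining the two inclusions yields ${\mathcal V}({\bf S})={\mathcal V}$.

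It remains to check that ${\bf L}\oplus1$ and ${\bf T}_3$ are covered by this statement, i.e.\ that they lie in ${\mathcal V}$ and satisfy $1+1\neq1$. Membership in ${\mathcal V}$ was already observed in Section~2 for both semirings. In ${\bf T}_3$ we read off $1+1=a\neq1$ from the addition table, and in ${\bf L}\oplus1$ definition~(\ref{equ8}) gives $1+1=a$ (both arguments equal $1$ and $(1,1)\notin\{(0,1),(1,0)\}$, so the last clause applies), which differs from $1$ because $1\notin L$ while $a\in L$. Hence the corollary applies to both, giving ${\mathcal V}({\bf L}\oplus1)={\mathcal V}$ and ${\mathcal V}({\bf T}_3)={\mathcal V}$. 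All the substance is in Theorem~\ref{th1}; what is left is purely formal, the only points worth a second glance being the closure argument for ${\mathcal V}({\bf S})\subseteq{\mathcal V}$ and the unwinding of~(\ref{equ8}) confirming $1+1\neq1$ in ${\bf L}\oplus1$.
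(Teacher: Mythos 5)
Your proposal is correct and follows the same route as the paper: the paper likewise obtains ${\mathcal V}({\bf S})\subseteq{\mathcal V}$ from ${\bf S}\in{\mathcal V}$ and the reverse inclusion directly from Theorem~\ref{th1}, merely stating this in one line where you spell out the Birkhoff argument and the verification that ${\bf L}\oplus1$ and ${\bf T}_3$ satisfy $1+1\neq1$. No discrepancies.
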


From this we conclude important structural properties of ${\mathcal V}$.

\begin{corollary}
$\mbox{}$
\begin{itemize}
\item The variety ${\mathcal V}$ is locally finite.
\item For each positive integer $n$ the variety ${\mathcal V}$ has a subdirectly irreducible member of cardinality $2^n+1$.
\item For each infinite cardinal $k$ the variety ${\mathcal V}$ has a subdirectly irreducible member of cardinality $k$.
\item ${\mathcal V}$ is residually big.
\item ${\mathcal V}$ has a normal form system for its terms and the word problem in ${\mathcal V}$ is solvable.
\end{itemize}
\end{corollary}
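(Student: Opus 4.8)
The plan is to assemble the five assertions from results already established; none requires work beyond bookkeeping.

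\textbf{Local finiteness.} I would give this in either of two ways. First, the identity $x+x+x\approx x+x+xxx\approx x+x$ holds in ${\mathcal V}$, so by Lemma~\ref{lem1} there are, for fixed $n$, only finitely many reduced representations in $x_1,\ldots,x_n$, hence only finitely many $n$-ary terms up to ${\mathcal V}$-equivalence; thus every finitely generated member of ${\mathcal V}$ is finite. Alternatively, by the preceding corollary ${\mathcal V}={\mathcal V}({\bf T}_3)$ with ${\bf T}_3$ finite, and a variety generated by a finite algebra is locally finite (cf.\ \cite{B}).

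\textbf{Subdirectly irreducible members and residual bigness.} Here I would use the construction ${\bf L}\oplus1$. For a positive integer $n$, take ${\bf L}$ to be the power-set lattice of an $n$-element set, a Boolean lattice with $2^n$ elements; then ${\bf L}\oplus1\in{\mathcal V}$ has $2^n+1$ elements and is subdirectly irreducible since ${\bf L}$ is Boolean (cf.\ \cite{CL}). For an infinite cardinal $k$, take ${\bf L}$ to be the Boolean lattice of all finite and cofinite subsets of a set of cardinality $k$, which itself has cardinality $k$; then ${\bf L}\oplus1\in{\mathcal V}$ is a subdirectly irreducible member of cardinality $k$. In particular ${\mathcal V}$ has subdirectly irreducible members of unbounded cardinality, so it is residually big.

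\textbf{Normal form system and word problem.} Lemma~\ref{lem1} attaches to every term a reduced representation of the form (\ref{equ1}), and the proof of Theorem~\ref{th1} shows that two terms equal in ${\mathcal V}$ acquire, after reduction, literally the same representation; hence the reduced representations form a normal form system for ${\mathcal V}$. Moreover the passage from a term to its reduced representation is effective: first rewrite the term in the form (\ref{equ1}), then repeatedly delete a summand $\prod_{s\in I_k}x_s$ whenever $I_i\cup I_j\subseteq I_k$ for mutually distinct $i,j,k$, a process that strictly decreases the number of summands and therefore terminates. Consequently ${\mathcal V}\models t\approx u$ is decided by computing and comparing the two reduced representations, so the word problem in ${\mathcal V}$ is solvable.

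I do not expect a real obstacle; the corollary is a direct consequence of Theorem~\ref{th1}, Lemma~\ref{lem1}, the preceding corollary and the cited subdirect irreducibility of ${\bf L}\oplus1$ for Boolean ${\bf L}$. The only point meriting a moment's care is the third item, where one must exhibit a genuine Boolean lattice of each prescribed infinite cardinality — the finite/cofinite algebra serves.
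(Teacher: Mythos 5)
Your proposal is correct and matches the paper's (implicit) derivation: the paper states this corollary without a written proof, and the intended argument is exactly the assembly you give — local finiteness from the finitely many reduced terms (or from ${\mathcal V}={\mathcal V}({\bf T}_3)$), the subdirectly irreducible members from ${\bf L}\oplus1$ with ${\bf L}$ a Boolean lattice of the appropriate cardinality, and the normal form system plus decidability from Lemma~\ref{lem1} together with the uniqueness of reduced representations established in the proof of Theorem~\ref{th1}. Your choice of the finite--cofinite Boolean algebra for the infinite cardinal case is a valid instantiation of what the paper leaves to the reader.
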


The previous results reveal the prominency of ${\bf T}_3$ since ${\mathcal V}={\mathcal V}({\bf T}_3)$. Although (\ref{equ7}) implies
\begin{equation}
1+x+x\approx1+x
\end{equation}
which looks similar to (\ref{equ5}), the varieties ${\mathcal B}$ and ${\mathcal V}$ show completely different structural behaviour.

\section{Some subvarieties of ${\mathcal C}$}

Several subvarieties of ${\mathcal C}$ were already mentioned. In the following theorem, we show how ${\mathcal B}$ and ${\mathcal V}$ are located within the lattice of subvarieties of ${\mathcal S}$.

\begin{theorem}
All inclusions in the Hasse diagram
\begin{center}
\setlength{\unitlength}{6mm}
\begin{picture}(6,12)
\put(3,1){\circle*{.2}}
\put(3,3){\circle*{.2}}
\put(1,5){\circle*{.2}}
\put(5,5){\circle*{.2}}
\put(3,7){\circle*{.2}}
\put(3,9){\circle*{.2}}
\put(3,11){\circle*{.2}}
\put(3,1){\line(0,1)2}
\put(3,7){\line(0,1)4}
\put(1,5){\line(1,-1)2}
\put(1,5){\line(1,1)2}
\put(5,5){\line(-1,-1)2}
\put(5,5){\line(-1,1)2}
\put(2.8,.2){${\mathcal T}$}
\put(3.6,2.85){${\mathcal D}$}
\put(.25,4.85){${\mathcal B}$}
\put(5.2,4.85){${\mathcal V}$}
\put(3.6,6.85){${\mathcal B}\vee{\mathcal V}$}
\put(3.3,8.85){${\mathcal C}$}
\put(2.8,11.35){${\mathcal S}$}
\end{picture}
\end{center}
are proper. Moreover, ${\mathcal B}\cap{\mathcal V}$ is the variety ${\mathcal D}$ of bounded distributive lattices.
\end{theorem}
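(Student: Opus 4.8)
The plan is to split the statement into two independent tasks: showing that each edge of the diagram is a strict inclusion, and showing ${\mathcal B}\cap{\mathcal V}={\mathcal D}$; the incomparability of ${\mathcal B}$ and ${\mathcal V}$ that the diagram also records will then come for free from the second task. The inclusions themselves I would dispose of at once: the trivial semiring is a one-element bounded distributive lattice (${\mathcal T}\subseteq{\mathcal D}$); in a bounded distributive lattice $1+x+x=1\vee x\vee x=1$ and $x+y+xyz=(x\vee y)\vee(x\wedge y\wedge z)=x\vee y$, so ${\mathcal D}\subseteq{\mathcal B}$ and ${\mathcal D}\subseteq{\mathcal V}$; and ${\mathcal B},{\mathcal V}\subseteq{\mathcal B}\vee{\mathcal V}\subseteq{\mathcal C}\subseteq{\mathcal S}$, the middle inclusion because ${\mathcal B}$ and ${\mathcal V}$ are by definition subvarieties of ${\mathcal C}$.

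For the properness of all but one edge I would exhibit, for each, an algebra lying in the larger variety but violating a defining identity of the smaller one, which keeps it out of the smaller variety. Concretely: ${\bf 2}\in{\mathcal D}\setminus{\mathcal T}$; the two-element Boolean ring (where $1+1=0$) lies in ${\mathcal B}\setminus{\mathcal D}$ since its addition is not idempotent, and in $({\mathcal B}\vee{\mathcal V})\setminus{\mathcal V}$ since there $1+1+1=1\neq0=1+1$ while ${\mathcal V}\models1+x+x\approx1+x$ (substitute $x\mapsto1$, $y\mapsto x$, $z\mapsto x$ in (\ref{equ7})); ${\bf T}_3\in{\mathcal V}\setminus{\mathcal D}$, again because $1+1=a\neq1$, and ${\bf T}_3\in({\mathcal B}\vee{\mathcal V})\setminus{\mathcal B}$ since $1+a+a=a\neq1$ violates (\ref{equ5}); and an ordinary unitary ring such as $(\mathbb Z,+,\cdot,0,1)$ lies in ${\mathcal S}\setminus{\mathcal C}$, not being multiplicatively idempotent.

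The one edge needing a real idea is ${\mathcal B}\vee{\mathcal V}\subsetneq{\mathcal C}$, for which the plan is to use ${\bf S}_3$. I would first observe that $x+x+1+1\approx1+1$ holds in both ${\mathcal B}$ and ${\mathcal V}$, hence in ${\mathcal B}\vee{\mathcal V}$: in ${\mathcal B}$ it reads $(1+x+x)+1\approx1+1$ and follows from (\ref{equ5}), and in ${\mathcal V}$ it reads $1+1+(x+x)\approx1+1$ and follows from (\ref{equ7}) by substituting $x\mapsto1$, $y\mapsto1$, $z\mapsto x+x$. Since ${\bf S}_3\in{\mathcal C}$ but $a+a+1+1=a\neq1=1+1$ in ${\bf S}_3$, we get ${\bf S}_3\notin{\mathcal B}\vee{\mathcal V}$. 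I expect the genuine obstacle here to be not the computation but the discovery that ${\bf S}_3$---which already fails to be Boolean and fails to lie in ${\mathcal V}$, but for two unrelated reasons---is nevertheless outside the join, together with pinning down a short identity witnessing this.

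Finally, for ${\mathcal B}\cap{\mathcal V}={\mathcal D}$, the inclusion ${\mathcal D}\subseteq{\mathcal B}\cap{\mathcal V}$ was already noted, so I would prove the converse. Let ${\bf S}=(S,+,\cdot,0,1)\in{\mathcal B}\cap{\mathcal V}$. Combining $1+x+x\approx1$ (from ${\mathcal B}$) with $1+x+x\approx1+x$ (from ${\mathcal V}$, as above) gives $1+x\approx1$ in ${\bf S}$; multiplying by a variable and using distributivity yields the absorption law $y+xy\approx y$, and specializing the factor to $1$ yields $y+y\approx y$. Together with the commutative idempotent semilattice reduct $(S,\cdot)$, the semiring distributive law, and $0$ and $1$ as least and greatest element, this makes ${\bf S}$ a bounded distributive lattice, so ${\bf S}\in{\mathcal D}$, proving ${\mathcal B}\cap{\mathcal V}={\mathcal D}$. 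Since ${\mathcal D}$ is by the above strictly contained in each of ${\mathcal B}$ and ${\mathcal V}$, neither of the latter contains the other, so the picture is indeed a Hasse diagram.
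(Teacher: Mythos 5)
Your proof is correct, and for the separation arguments it runs essentially parallel to the paper's: the same witnesses ${\bf 2}$, the two-element Boolean ring, ${\bf T}_3$ and ${\bf S}_3$ appear (the paper takes ${\rm GF}(3)$ rather than $\mathbb Z$ for ${\mathcal S}\setminus{\mathcal C}$, and keeps ${\bf S}_3$ out of ${\mathcal B}\vee{\mathcal V}$ via the identity $1+x+xy+xy\approx1+x$ rather than your $x+x+1+1\approx1+1$; both identities hold in ${\mathcal B}$ and in ${\mathcal V}$ and both fail in ${\bf S}_3$, so this is only a cosmetic difference). The genuine divergence is in the proof of ${\mathcal B}\cap{\mathcal V}\subseteq{\mathcal D}$. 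The paper invokes Guzm\'an's theorem that ${\mathcal B}$ has exactly two subdirectly irreducible members, notes that the two-element Boolean ring is excluded from ${\mathcal V}$ because $1+1=0$ forces $0=1$ there, and concludes that ${\bf 2}$ is the only subdirectly irreducible member of the intersection. You instead derive $1+x\approx1$ by combining $1+x+x\approx1$ with the ${\mathcal V}$-identity $1+x+x\approx1+x$, then obtain the absorption law $y+xy\approx y$ and additive idempotence, turning any member of ${\mathcal B}\cap{\mathcal V}$ directly into a bounded distributive lattice. Your route is more elementary and self-contained, trading the external classification result for a short equational computation; to make it fully airtight you should also record the second absorption law, $y(y+x)\approx yy+yx\approx y+xy\approx y$, since both absorption identities are needed to conclude that the two semilattice orders coincide and that $(S,+,\cdot)$ really is a lattice (distributivity and the boundedness by $0$ and $1$ then come for free, as you say). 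With that one line added, your argument is complete.
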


\begin{proof}
$\mbox{}$
\begin{itemize}
\item The two-element lattice belongs to $({\mathcal B}\cap{\mathcal V})\setminus{\mathcal T}$.
\item ${\rm GF(2)}\in({\mathcal B}\setminus({\mathcal B}\cap{\mathcal V}))\cap(({\mathcal B}\vee{\mathcal V})\setminus{\mathcal V})$
\item ${\bf T}_3\in({\mathcal V}\setminus({\mathcal B}\cap{\mathcal V}))\cap(({\mathcal B}\vee{\mathcal V})\setminus{\mathcal B})$
\item ${\rm GF(3)}\in{\mathcal S}\setminus{\mathcal C}$
\item ${\bf S}_3\in{\mathcal C}\setminus({\mathcal B}\vee{\mathcal V})$ since
\[
{\mathcal B}\models1+x+xy+xy\approx1+x(1+y+y)\approx1+x\cdot1\approx1+x
\]
and
\[
{\mathcal V}\models1+x+xy+xy\approx1+x
\]
and hence
\[
{\mathcal B}\vee{\mathcal V}={\rm HSP}({\mathcal B}\cup{\mathcal V})\models1+x+xy+xy\approx1+x,
\]
but in ${\bf S}_3$ we have
\[
1+1+1\cdot a+1\cdot a=1+1+a+a=1+a=a\neq1=1+1.
\]
\end{itemize}
Since ${\mathcal B}$ has only two subdirectly irreducible members, namely the two-element Boolean ring ${\bf R}=(R,+,\cdot,0,1)$ and ${\bf 2}$, and since ${\bf R}$ satisfies $1+1=0$ and hence ${\bf R}\notin{\mathcal V}$, we conclude that ${\mathcal B}\cap{\mathcal V}$ contains just one subdirectly irreducible member, namely ${\bf 2}$, and hence it coincides with ${\mathcal D}$.
\end{proof}

Authors' addresses:

Ivan Chajda \\
Palack\'y University Olomouc \\
Faculty of Science \\
Department of Algebra and Geometry \\
17.\ listopadu 12 \\
771 46 Olomouc \\
Czech Republic \\
ivan.chajda@upol.cz

Helmut L\"anger \\
TU Wien \\
Faculty of Mathematics and Geoinformation \\
Institute of Discrete Mathematics and Geometry \\
Wiedner Hauptstra\ss e 8-10 \\
1040 Vienna \\
Austria \\
helmut.laenger@tuwien.ac.at
\end{document}